\newtheorem{theorem}{Theorem}[section]
\newtheorem{corollary}[theorem]{Corollary}
\newcommand{\mca}{\mathcal}
\newcommand{\poi}{\mathbb{N}}
\newcounter{tbox}
\newcommand{\sta}[1]{\medskip\medskip\refstepcounter{tbox}\noindent{\parbox{\textwidth}{(\thetbox) \emph{#1}}}\vspace*{0.3cm}}
\newcommand{\mylongtitle}[1]{%
  \ifodd\value{page}%
    \protect\parbox{0.97\linewidth}{#1}\hfill%
  \else%
    \hfill\protect\parbox{0.97\linewidth}{#1}%
  \fi%
}
\title[Bull-free graphs and $\chi$-boundedness]{Bull-free graphs and $\chi$-boundedness}
\author{Sepehr Hajebi$^{\dagger}$}
\thanks{$^{\dagger}$ Department of Combinatorics and Optimization, University of Waterloo, Waterloo, Ontario, Canada}
\date{\today}
\begin{document}
\maketitle

\begin{abstract}
 A \textit{bull} is a graph obtained from a four-vertex path by adding a vertex adjacent to the two middle vertices of the path. A graph $G$ is \textit{bull-free} if no induced subgraph of $G$ is a bull. We prove that for all $k,t\in \poi$, if $G$ is a bull-free graph of clique number at most $k$ and every triangle-free induced subgraph of $G$ has chromatic number at most $t$, then $G$ has chromatic number at most $k^{O(\log t)}$. We further show that the bound $k^{O(\log t)}$ is best possible up to a multiplicative constant in the exponent.

Thomass\'{e}, Trotignon, and Vu\v{s}kovi\'{c} (2017) were the first to give a bound of the form $2^{p\log p}$, where $p=O(k^2+t)$, with a proof that uses Chudnovsky's structure theorem for bull-free graphs. This was improved by Chudnovsky, Cook, Davies, and Oum (2026) to a bound of the form $k^{O(t)}$, with a 10-page proof that again relies heavily on Chudnovsky's structure theorem.

Our proof is a single page long and completely avoids the structure theorem, instead using only a result of Chudnovsky and Safra (which itself has a short proof).
\end{abstract}

\section{Introduction}
We denote the set of all positive integers by $\poi$. Graphs in this paper have finite vertex sets, no loops, and no parallel edges. Given a graph $G=(V(G),E(G))$, the chromatic number and the clique number of $G$ are denoted, respectively, by $\chi(G)$ and $\omega(G)$. A graph class $\mca{C}$ is \textit{hereditary} if $\mca{C}$ is closed under taking induced subgraphs, and $\mca{C}$ is \textit{{\rm(}polynomially{\rm)} $\chi$-bounded} if there is a (polynomial) function $f:\poi\rightarrow\poi$ such that $\chi(G)\leq f(\omega(G))$ for every $G\in \mca{C}$. The study of $\chi$-bounded classes is a central topic in structural graph theory; see \cite{scotticm, survey} for surveys. This work focuses on $\chi$-boundedness in the class of bull-free graphs. A \textit{bull} is a graph obtained from a four-vertex path by adding a vertex adjacent to the two middle vertices of the path. A graph $G$ is \textit{bull-free} if no induced subgraph of $G$ is a bull. 

The class of bull-free graphs is not $\chi$-bounded since triangle-free graphs are bull-free, and there are triangle-free graphs of arbitrarily large chromatic number \cite{mycielski}. Thomass\'{e}, Trotignon, and Vu\v{s}kovi\'{c} \cite{tripaper} proved that the latter is in fact the only obstruction to $\chi$-boundedness for bull-free graphs. For all $t\in \poi$, let $\mca{B}_{t}$ be the class of all bull-free graphs whose triangle-free induced subgraphs have chromatic number at most $t$.

\begin{theorem}[Thomass\'{e}, Trotignon, Vu\v{s}kovi\'{c}; 8.1 and 8.2 in \cite{tripaper}]\label{thm:trithm}
For all $t\in \poi$, every graph $G\in \mca{B}_{t}$ satisfies $\chi(G)\leq p^p$ with $p=O(\omega(G)^2+t)$.
\end{theorem}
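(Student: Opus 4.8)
The plan is to prove this via Chudnovsky's structure theorem for bull-free graphs, which (before the present work) is essentially the only available route. Write $k=\omega(G)$, and aim for a bound $\chi(G)\le f(k,t)$ of the shape $p^p$ with $p$ of order $k^2+t$. I would induct on $|V(G)|$, processing $G$ according to how the structure theorem treats the associated trigraph: either it is \emph{basic} (one of the explicitly described classes), or it admits a \emph{homogeneous set} (module), or it admits a \emph{homogeneous pair}. The two decomposition cases reduce to strictly smaller members of $\mca{B}_t$ (note $\mca{B}_t$ is hereditary), and the basic case is bounded directly. The point of the generous target $p^p$ is that it leaves enough slack to close the recursion through all three cases.

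Homogeneous sets first. If the nontrivial modules $M_v$ realize $G$ as a substitution of the $G[M_v]$ into a prime quotient $H$ (a transversal of $G$, hence again bull-free and in $\mca{B}_t$), then the product coloring gives $\chi(G)\le \chi(H)\cdot\max_v\chi(G[M_v])$; in the degenerate subcases $\chi(G)$ is the maximum (disjoint union) or the sum (join) of the parts, with $\omega$ behaving correspondingly. Each $G[M_v]$ and $H$ is smaller, so induction applies and it remains to bound $\chi$ for \emph{prime} bull-free graphs. The reason to take $p$ \emph{quadratic} in $k$ rather than linear is precisely to make $p^p$ super-multiplicative enough to absorb the product $\chi(H)\cdot\max_v\chi(G[M_v])$ against $\omega(G)\ge\omega(H)$ and $\omega(G)\ge\max_v\omega(G[M_v])$; the join case is absorbed by super-additivity.

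Homogeneous pairs are the genuinely bull-specific and harder case. When the trigraph admits a homogeneous pair $(A,B)$, I would contract it to a bounded gadget and argue that $\chi$ increases by a factor controlled by $\omega$, reducing again to a smaller graph. This is where I expect the quadratic term $(\omega(G)+1)^2$ to enter: a homogeneous pair is governed by how its two sides interact as near-cliques, so the relevant count scales like $\omega^2$. Getting this bookkeeping right in the trigraph setting — tracking the semi-adjacent (``switchable'') pairs, keeping the gadget bull-free, and not undercounting cliques — is the first main obstacle.

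Finally, the basic classes, where $t$ enters. Each basic bull-free trigraph is, up to substitution and a bounded amount of extra clique structure, essentially triangle-free, so I would bound its chromatic number by roughly $t$ times a polynomial in $\omega$, invoking the hypothesis defining $\mca{B}_t$ that triangle-free induced subgraphs have chromatic number at most $t$. The second obstacle, and the reason such a proof runs long, is that this must be verified case-by-case for every basic class in Chudnovsky's theorem, through the full trigraph apparatus. I expect the homogeneous-pair control to be the true crux in any structure-theorem-based attempt; a cleaner induction on $\omega$ exploiting the constrained neighborhood structure around a triangle would be preferable if the neighborhood analysis could be made to yield the same recursion, and it is exactly this heavy dependence on the structure theorem that the present paper sidesteps by appealing instead to a single result of Chudnovsky and Safra.
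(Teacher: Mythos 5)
Two issues, one of orientation and one of substance. On orientation: the paper does not prove \ref{thm:trithm} at all --- it is quoted from \cite{tripaper}, and the paper's entire contribution is to avoid the route you have chosen when proving the stronger \ref{thm:main}. So your proposal can only be judged as a reconstruction of the Thomass\'{e}--Trotignon--Vu\v{s}kovi\'{c} argument, and as such it is an outline rather than a proof. The two steps you yourself flag as ``obstacles'' --- controlling $\chi$ across a homogeneous pair in the trigraph setting, and verifying the bound case-by-case for every basic class of Chudnovsky's structure theorem --- are exactly where the content of such a proof lives, and nothing in your text resolves either one. ``I would contract it to a bounded gadget and argue that $\chi$ increases by a factor controlled by $\omega$'' is a statement of intent, not an argument: one must exhibit the gadget, show it stays bull-free, show the contraction does not create large cliques, and quantify the loss in $\chi$. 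Flagging the crux is not the same as crossing it.

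On substance: the one case you do argue, homogeneous sets, is argued incorrectly. You propose to close the recursion $\chi(G)\le\chi(H)\cdot\max_v\chi(G[M_v])$ using only $\omega(G)\ge\omega(H)$ and $\omega(G)\ge\max_v\omega(G[M_v])$, with super-multiplicativity of $p\mapsto p^p$ absorbing the product. No $\chi$-bounding function $f$ whatsoever can do this: those two inequalities alone permit $\omega(H)=\omega(G[M_v])=\omega(G)=k$, and then you would need $f(k)^2\le f(k)$. What actually saves such inductions is the additive clique relation: if a module $M$ has a neighbor outside it, a maximum clique of $G[M]$ extends through the cliques of the quotient meeting the vertex that represents $M$, giving (roughly) $\omega(H)+\omega(G[M])-1\le\omega(G)$; it is this, not monotonicity in $\omega$, that lets a function like $p^p$ close the recursion, and any correct writeup must track it. Note finally that the paper sidesteps every difficulty you enumerate: it quotes \ref{thm:subs} (Bourneuf--Thomass\'{e}) to dispose of substitution in one stroke, reduces to prime graphs, and replaces the whole structure theorem by the single Chudnovsky--Safra statement \ref{thm:CSbullfree} that prime bull-free graphs are N-perfect, followed by a one-page layering argument; this yields $\omega(G)^{4\log t+13}$, asymptotically far stronger than $p^p$, with none of the case analysis your plan requires.
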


One motivation for \ref{thm:trithm} was a conjecture \cite{nesetril} that \textsl{every} hereditary class in which triangle-free graphs have bounded chromatic number is $\chi$-bounded. This was later refuted by Carbonero, Hompe, Moore, and Spirkl \cite{espertdis1}. Soon after, Bria\'{n}ski, Davies, and Walczak \cite{espertdis2} adapted the method of \cite{espertdis1} and disproved another conjecture, due to Esperet \cite{esperet}, that every $\chi$-bounded hereditary class is polynomially $\chi$-bounded. 

Recently, Chudnovsky, Cook, Davies, and Oum \cite{polypaper} proved that Esperet's conjecture holds in the class of bull-free graphs (they also proved the same for several other hereditary classes, yet declared bull-free graphs as ``the most difficult case'' \cite{polypaper}). In fact, they proved the following:

\begin{theorem}[Chudnovsky, Cook, Davies, Oum; see 1.2(v) in \cite{polypaper}]\label{thm:polythm}
For all $t\in \poi$, the class $\mca{B}_{t}$ is polynomially $\chi$-bounded.
\end{theorem}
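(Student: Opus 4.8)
The plan is to prove the stronger explicit bound $\chi(G)\le \omega(G)^{O(\log t)}$ announced in the abstract, which for each fixed $t$ is polynomial in $\omega(G)$ and hence yields \ref{thm:polythm}. Write $k=\omega(G)$ and let $f(t,k)$ denote the supremum of $\chi(G)$ over all bull-free graphs $G$ with $\omega(G)\le k$ in which every triangle-free induced subgraph has chromatic number at most $t$. I would induct on $t$, aiming for the recursion
\[ f(t,k)\ \le\ k\cdot f(\lceil t/2\rceil,\,k). \]
Iterating from the base case $t=1$ then gives $f(t,k)\le k^{\lceil\log_2 t\rceil+1}=k^{O(\log t)}$, which is exactly the target. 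The base case $t=1$ is immediate: if every triangle-free induced subgraph has chromatic number $1$, then $G$ has no induced $P_3$ (a triangle-free graph of chromatic number $2$), so $G$ is a disjoint union of cliques and $\chi(G)=\omega(G)\le k$.

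For the inductive step it suffices to partition $V(G)$ into at most $k$ sets $V_1,\dots,V_k$ such that each induced subgraph $G[V_i]$ again satisfies the hypothesis with the halved parameter $\lceil t/2\rceil$ — that is, every triangle-free induced subgraph of $G[V_i]$ has chromatic number at most $\lceil t/2\rceil$. Indeed, each $G[V_i]$ is bull-free with $\omega(G[V_i])\le k$, so it can be properly colored using its own disjoint palette of at most $f(\lceil t/2\rceil,k)$ colors; the union is a proper coloring of $G$ with at most $k\cdot f(\lceil t/2\rceil,k)$ colors, which is the desired recursion. The entire content is thus to produce \emph{one} partition into few parts that simultaneously halves the chromatic number of \emph{every} triangle-free induced subgraph of $G$. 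For any single triangle-free subgraph this is trivial — group its colour classes into two halves — so the difficulty lies precisely in the word ``simultaneously'': one fixed partition must work across all triangle-free subgraphs at once.

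This is where bull-freeness enters, and where I would invoke the short result of Chudnovsky and Safra. The guiding intuition is that in a bull-free graph the triangle-free induced subgraphs cannot be spread arbitrarily: a triangle together with two independently attached pendant vertices is exactly a forbidden bull, which forces the edges of any triangle-free part to be ``local'' — concentrated on the neighbourhoods of the vertices of a fixed clique or of a fixed vertex. Concretely I would fix a maximum clique $Q=\{q_1,\dots,q_k\}$ and try to define the block $V_i$ from the adjacency pattern of each vertex to $Q$ (say by the least index $q_i$ to which it is nonadjacent, or by a neighbourhood-based potential), then use the Chudnovsky--Safra lemma to show that inside each block a triangle-free subgraph splits into two pieces of controlled chromatic number, halving the budget. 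The main obstacle is exactly this step: converting the qualitative ``locality'' of triangles in bull-free graphs into a quantitative, uniform partition into $O(k)$ parts that provably halves the triangle-free chromatic number. I expect that, and not the bookkeeping of the recursion, to be the crux, with the Chudnovsky--Safra result serving as the single external input that makes the qualitative statement quantitative while keeping the overall argument short.
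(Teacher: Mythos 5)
Your proposal is not a proof: its central step --- a partition of $V(G)$ into at most $\omega(G)$ parts such that \emph{every} triangle-free induced subgraph of each part has chromatic number at most $\lceil t/2\rceil$ --- is never constructed, and you acknowledge yourself that it is ``the crux.'' Everything surrounding it (the recursion $f(t,k)\le k\cdot f(\lceil t/2\rceil,k)$, the base case $t=1$, the bookkeeping) is routine and correct, but it all hinges on this missing partition, for which you offer only a heuristic (fix a maximum clique $Q$ and define blocks by adjacency to $Q$) with no argument that such a partition exists. The difficulty is real, not merely unaddressed: the requirement quantifies over all triangle-free induced subgraphs of a block \emph{simultaneously}, and nothing in the Chudnovsky--Safra theorem --- which says that every prime bull-free graph is N-perfect, i.e.\ each vertex has a perfect neighborhood or a perfect non-neighborhood --- speaks to splitting triangle-free chromatic number in half; it is a statement about primeness and perfection, and it is not clear how it would certify any such partition.

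For comparison, the paper's proof does not recurse on $t$ at all, and the $\log t$ in the exponent arises by a different mechanism. The paper first reduces to \emph{prime} graphs via the substitution lemma of Bourneuf and Thomass\'e (\ref{thm:subs}), then applies Chudnovsky--Safra (\ref{thm:CSbullfree}) to get N-perfection, and inducts on $\omega(G)$: if some non-neighborhood $G\setminus N_G(u)$ is perfect, the induction closes at once; otherwise every neighborhood is perfect, and the proof runs a BFS layering from a fixed vertex. Bull-freeness is then used twice on configurations of the form ``triangle plus attachments'' to show that every prime induced subgraph $H$ of a suitable subset of a BFS layer satisfies $\omega(H)\le 2$, whence $\chi(H)\le t=\omega(H)^{\log t}$; this identity $t=2^{\log t}$, applied to graphs of clique number exactly $2$, is the sole source of the $\log t$ in the exponent. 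A second application of \ref{thm:subs} lifts the bound from prime subgraphs to the layers, and the layers are combined with the standard factor-of-two parity argument. So even if your simultaneous-halving partition could be produced, you would be proving the theorem by a genuinely different route; as it stands, the proposal identifies the right target bound but leaves its main step as a conjecture.
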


The proofs of both \ref{thm:trithm} and \ref{thm:polythm} rely on the structure theorem for bull-free graphs. This is a difficult result due to Chudnovsky, whose statement alone requires several pages of definitions, and whose proof is spread across a series of four papers \cite{bullfree1,bullfree23,bullfree2,bullfree3}. The proof of \ref{thm:polythm}, in particular, takes over 10 pages and uses several results from \cite{bullfree1,bullfree23,bullfree2,bullfree3} on the structure of bull-free graphs and ``trigraphs.'' As for the bound, it is mentioned in \cite{polypaper} that a careful analysis of their proof gives $\chi(G)\leq \omega(G)^{O(t)}$ for all $G\in \mca{B}_t$.

Our main result is the following strengthening of \ref{thm:polythm}:

\begin{theorem}\label{thm:main}
We have $\chi(G)\leq \omega(G)^{4\log t+13}$ for all $t\in \poi$ and every graph $G\in \mca{B}_t$.
\end{theorem}

This has two advantages. First, the proof is a single page long. It entirely bypasses the material from \cite{bullfree1,bullfree23,bullfree2,bullfree3} and combines only one result of Chudnovsky and Safra \cite{bullfreeeh} (which has a proof no longer than five pages) with a ``layering argument.'' We remark that it is quite unusual for a layering proof to yield polynomial bounds. Second, not only does \ref{thm:main} give an exponential improvement in the degree of the polynomial bound claimed in \cite{polypaper}, it turns out that logarithmic degree in $t$ is asymptotically best possible:

\begin{theorem}\label{thm:lowerbound}
For all $t\in \poi$, every polynomial $\chi$-bounding function for $\mca{B}_t$ has degree larger than $\frac{1}{2}\log t$.
\end{theorem}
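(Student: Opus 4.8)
The plan is to prove the statement directly by exhibiting explicit witnesses: for each integer $t\ge 2$ and each $k\in\poi$ I will construct a graph $G_k\in\mca{B}_t$ with $\omega(G_k)=2^k$ and $\chi(G_k)\ge (2t)^{k/2}$. Granting this, suppose $f$ is a polynomial $\chi$-bounding function for $\mca{B}_t$ of degree $d$. Since $G_k\in\mca{B}_t$ we get $(2t)^{k/2}\le\chi(G_k)\le f(\omega(G_k))=f(2^k)$, and since $f(2^k)\le C\cdot 2^{dk}$ for some constant $C$ and all large $k$, taking $k$-th roots and letting $k\to\infty$ yields $(2t)^{1/2}\le 2^d$, i.e. $d\ge\tfrac12\log(2t)=\tfrac12+\tfrac12\log t>\tfrac12\log t$, as required. (For $t=1$ the statement fails trivially, as $\mca{B}_1$ is the class of edgeless graphs; I assume $t\ge 2$.)

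To build the witnesses, let $F$ be the $(t-2)$-fold iterated Mycielskian of $K_2$, so that $F$ is triangle-free with $\chi(F)=t$, and let $G_k=F^{[k]}$ be the $k$-fold lexicographic power of $F$ (equivalently, the graph obtained by repeatedly substituting a copy of $F$ for each vertex). I will lean on three standard ingredients. First, the bull is a prime graph (it has no nontrivial module), so the class of bull-free graphs is closed under substitution; as $F$ is triangle-free, hence bull-free, every $G_k$ is bull-free. Second, both $\omega$ and the fractional chromatic number $\chi_f$ are multiplicative under lexicographic product, so $\omega(G_k)=\omega(F)^k=2^k$ and $\chi(G_k)\ge\chi_f(G_k)=\chi_f(F)^k$. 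Third, by the Larsen--Propp--Ullman formula $\chi_f(M(H))=\chi_f(H)+1/\chi_f(H)$, so $\chi_f(M(H))^2\ge\chi_f(H)^2+2$, and an immediate induction from $\chi_f(K_2)=2$ gives $\chi_f(F)^2\ge 4+2(t-2)=2t$; hence $\chi(G_k)\ge\chi_f(F)^k\ge(2t)^{k/2}$.

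The remaining step, which I expect to be the only delicate point, is to verify that $G_k\in\mca{B}_t$, that is, that every triangle-free induced subgraph $H$ of $G_k$ satisfies $\chi(H)\le t$; I will prove the stronger claim $\chi(H)\le\chi(F)$ by induction on $k$, with $k=1$ immediate. Write $G_k=F[G_{k-1}]$ and group the vertices of $H$ into the fibres lying over the vertices of $F$. The key observation is that if a vertex $a$ of $F$ has a neighbour $b$ in $F$ whose fibre is nonempty, then the fibre over $a$ must be independent in $H$, since an edge inside the fibre over $a$ together with any vertex in the fibre over $b$ would create a triangle. Therefore the first coordinates actually used split into those that are isolated and those that are non-isolated in the triangle-free graph that $F$ induces on them, and there are no edges of $H$ between the two parts. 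The non-isolated part is a blow-up by independent sets of an induced subgraph of $F$, so it needs at most $\chi(F)$ colours; the isolated part is a disjoint union of triangle-free induced subgraphs of $G_{k-1}$, each needing at most $\chi(F)$ colours by the induction hypothesis. Reusing one palette of size $\chi(F)$ on the two edge-disjoint parts gives $\chi(H)\le\chi(F)=t$, completing the induction and hence the argument; everything outside this fibre analysis is standard fractional and Mycielskian graph theory together with the primeness of the bull.
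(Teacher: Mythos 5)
Your proof is correct, and it takes a more self-contained route than the paper's. Both arguments share the same core ingredients: the Mycielski graph $M_{t-1}$, the Larsen--Propp--Ullman recursion (\ref{thm:mycielskifrac}) giving fractional chromatic number at least $\sqrt{2t}$ (your induction $\chi_f(F)^2\ge 2t$ is exactly the paper's \ref{cor:frac}), and the fact that substitution preserves bull-freeness because the bull is prime and connected. The difference is in how the degree lower bound is extracted. The paper forms the closure $\mca{C}^*$ of the class $\{$induced subgraphs of $M_{t-1}\}\cup\{$complete graphs$\}$ under disjoint union and substitution, checks $\mca{C}^*\subseteq \mca{B}_t$, and then cites a theorem of Chudnovsky, Penev, Scott and Trotignon (\ref{thm:frac}) as a black box to conclude that every polynomial $\chi$-bounding function for $\mca{C}^*$ has degree larger than $\lfloor \frac12\log t\rfloor$. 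You instead inline the content of that cited theorem in the special case needed: you take explicit witnesses $G_k=F^{[k]}$ (lexicographic powers, which all lie inside the paper's $\mca{C}^*$), use multiplicativity of $\omega$ and of $\chi_f$ under the lexicographic product to get $\omega(G_k)=2^k$ and $\chi(G_k)\ge (2t)^{k/2}$, and let $k\to\infty$. Your inductive fibre analysis showing that every triangle-free induced subgraph of $G_k$ is $t$-colourable plays the role of the paper's terser observation that the triangle-free members of $\mca{C}^*$ are obtained from induced subgraphs of $M_{t-1}$ by disjoint unions and blow-ups by stable sets; your version is longer but fully detailed, and it is correct (the split of the used fibres into isolated and non-isolated base vertices, with no edges between the two parts, is exactly right). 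What the paper's route buys is brevity; what yours buys is self-containedness --- no appeal to \ref{thm:frac} --- plus explicit extremal graphs, at the cost of having to quote multiplicativity of $\chi_f$ under lexicographic products.

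One quibble: you assert the statement fails for $t=1$, whereas the paper declares that case ``immediate.'' Your reading is the defensible one --- $\mca{B}_1$ is the class of edgeless graphs, and the constant function $1$ is a polynomial $\chi$-bounding function of degree $0$, which is not larger than $\frac12\log 1=0$ --- so this is a defect of the statement in the degenerate case (or a matter of convention about degrees), not a gap in your argument for the substantive cases $t\ge 2$.
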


We will prove \ref{thm:main} and \ref{thm:lowerbound} in Sections~\ref{sec:main} and \ref{sec:lowerbound}, respectively.

\section{The upper bound}\label{sec:main}

Let $G$ be a graph. For $X\subseteq V(G)$, we use $X$ to denote both the set $X$ and the induced subgraph of $G$ with vertex set $X$. For $v\in V(G)$, we write $N_X(v)$ for the set of all neighbors of $v$ in $X$. We say that $X$ is a \textit{homogeneous set in $G$} if $1<|X|<|V(G)|$ and for every $v\in V(G)\setminus X$, either $N_X(v)=X$ or $N_X(v)=\varnothing$. We say that $G$ is \textit{prime} if there is no homogeneous set in $G$. Recall that a graph $G$ is \textit{perfect} if $\chi(H)\leq \omega(H)$ for every induced subgraph $H$ of $G$. We say that $G$ is \textit{N-perfect} if for every $v\in V(G)$, either $N_G(v)$ or $G\setminus N_G(v)$ is perfect. Observe that every induced subgraph of an N-perfect graph is also N-perfect.

We only use one result on the structure of bull-free graphs, which has a fairly short proof:

\begin{theorem}[Chudnovsky and Safra; see 1.4 and 4.3 in \cite{bullfreeeh}]\label{thm:CSbullfree}
Every prime bull-free graph is N-perfect.
\end{theorem}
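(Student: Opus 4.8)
The plan is to argue by contradiction. Suppose $G$ is prime and bull-free but fails to be N-perfect, and fix $v\in V(G)$ for which both $N:=N_G(v)$ and $M:=G\setminus N_G(v)$ are imperfect; note that $v\in M$, and since $v$ is complete to $N$ and anticomplete to $M\setminus\{v\}$, the imperfection of $M$ is already witnessed inside $M\setminus\{v\}$. By the strong perfect graph theorem, $N$ contains an odd hole or odd antihole, and so does $M\setminus\{v\}$. A useful first reduction exploits that the bull is self-complementary, so the class of bull-free graphs is closed under complementation; primeness is preserved (a set is homogeneous in $G$ iff it is homogeneous in $\overline{G}$), and N-perfection is preserved as well (since adding a vertex complete or anticomplete to the rest does not affect perfection). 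As complementing interchanges $N$ with $M$ (up to the single vertex $v$) and interchanges odd holes with odd antiholes, it pairs the case of odd holes on both sides with the case of odd antiholes on both sides, cutting the four combinations down to three and, in that main case, letting me take the witness in $N$ to be an odd hole $C$.

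The heart of the matter is a family of attachment restrictions describing how one vertex may see an odd hole (or antihole) in a bull-free graph, each proved by exhibiting a bull whenever the attachment is ``bad''. The representative computation is the following: if $C=c_1\cdots c_m$ is an odd hole with $C\subseteq N$, so that $v$ is complete to $C$, and $w$ is a non-neighbor of $v$ with $w\sim c_i$ but $w\not\sim c_{i-1},c_{i+1},c_{i+2}$, then $\{w,c_i,v,c_{i+2};c_{i-1}\}$ induces a bull, with spine $w\dd c_i\dd v\dd c_{i+2}$ and apex $c_{i-1}$ adjacent to the two central spine vertices $c_i$ and $v$. Running this computation together with its mirror image, the analogous statements for the two ends of a non-edge of $C$, and the complementary versions for antiholes, severely constrains the possible neighbourhoods $N_C(w)$; the key consequence is that, once the fixed complete/anticomplete relation of $w$ to $v$ is taken into account, no vertex can attach to $C$ in a genuinely mixed fashion that is bull-free.

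Finally I would feed primeness into this. Since $G$ is prime and $1<|C|<|V(G)|$, the witness $C$ is not homogeneous, so some vertex attaches to it in mixed fashion; tracing such distinguishing vertices, and using that every vertex of $M\setminus\{v\}$ is anticomplete to $v$ while every vertex of $N$ is complete to $v$, forces the imperfect witness contained in $M\setminus\{v\}$ to interact with $C$, after which the attachment restrictions leave no possibility that is simultaneously bull-free and free of a homogeneous set — yielding the desired contradiction. I expect the main obstacle to be exactly this last combination step: bookkeeping the constraints coming from the two imperfect sides at once while respecting the asymmetric role of $v$, and in particular dealing with the antihole witnesses (here complementation only partially symmetrizes, since the ``odd antihole in $N$'' and ``odd antihole in $M$'' case is fixed by the involution). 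The individual bull-finding lemmas are routine once the configuration is fixed; it is organizing the casework so that primeness reliably delivers a distinguishing vertex of the right type that will require the real care.
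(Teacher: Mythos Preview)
The paper does not contain a proof of this theorem; it is quoted from Chudnovsky and Safra and used as a black box (the paper even remarks that the original proof ``has a fairly short proof'' of about five pages). So there is no in-paper argument to compare your attempt against.

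As for the proposal on its own terms: the high-level plan is plausible and the sample bull you exhibit is correct (the five vertices $w,c_i,v,c_{i+2},c_{i-1}$ do induce a bull under your hypotheses, with triangle $\{c_i,v,c_{i-1}\}$ and pendants $w,c_{i+2}$). But what you have written is a sketch, not a proof, and you say so yourself: the ``last combination step'' --- where primeness of $G$ is supposed to force the imperfect witness in $M\setminus\{v\}$ to interact with the odd hole $C\subseteq N$ in a way that yields a bull --- is not carried out. That step is the entire content of the theorem. Exhibiting one forbidden attachment pattern is far from a classification of all attachments of a vertex to an odd hole (or antihole) in a bull-free graph; and even granting such a classification, you still need an argument that propagates a distinguishing vertex from the non-homogeneity of $C$ all the way to a contradiction with the second imperfect witness, handling the mixed hole/antihole cases that your complementation trick does not eliminate. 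None of that is present. Your outline is broadly consonant with how the Chudnovsky--Safra argument runs, but the substantive work --- the full attachment lemmas and the structural link between the two sides --- is exactly what you have deferred.
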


We also need the following. A similar result from \cite{substituition} was also used in \cite{polypaper} to prove \ref{thm:polythm}.

\begin{theorem}[Bourneuf and Thomass\'{e}; see 6.1 in \cite{twwpoly}]\label{thm:subs}
Let $\mca{C}$ be a hereditary class and let $k\in \poi$ such that $\chi(G)\leq \omega(G)^k$ for every prime graph $G\in \mca{C}$. Then $\chi(G)\leq \omega(G)^{2k+3}$ for every graph $G\in \mca{C}$.
\end{theorem}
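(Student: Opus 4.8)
The plan is to induct on $|V(G)|$ and to reduce every non-prime graph, through its modular decomposition, to strictly smaller members of $\mca{C}$. Write $G=H[(M_v)_{v\in V(H)}]$ for the substitution in which each $v\in V(H)$ is replaced by the induced subgraph $M_v$, where the $M_v$ are the parts of the coarsest modular partition of $G$. By Gallai's theorem, for $|V(G)|\ge 2$ exactly one of the following holds: $G$ is disconnected (so $H$ is edgeless); $\overline{G}$ is disconnected (so $H$ is complete); or $H$ is prime. In each case every $M_v$ is a proper induced subgraph of $G$, hence lies in $\mca{C}$ and satisfies $\chi(M_v)\le\omega(M_v)^{2k+3}$ by induction; selecting one vertex of each $M_v$ realises $H$ as an induced subgraph of $G$, so $H\in\mca{C}$, and in the prime case $\chi(H)\le\omega(H)^k$ by hypothesis. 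Put $a_v=\omega(M_v)$, so that $\omega(G)=\max\{\sum_{v\in K}a_v:\ K\text{ a clique of }H\}$.

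The two non-prime cases are immediate and show why a power function is forced. If $H$ is edgeless then $\chi(G)=\max_v\chi(M_v)\le\max_v a_v^{2k+3}=\omega(G)^{2k+3}$. If $H$ is complete then $\chi(G)=\sum_v\chi(M_v)\le\sum_v a_v^{2k+3}\le\big(\sum_v a_v\big)^{2k+3}=\omega(G)^{2k+3}$, the middle step being the superadditivity $\sum_i x_i^{c}\le(\sum_i x_i)^{c}$ for $c\ge 1$ and $x_i\ge 0$. This inequality is the one arithmetic fact the whole argument rests on.

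Everything then comes down to the prime case, and to a single weighted-colouring inequality. For any proper colouring of $H$ with colour classes $C_1,\dots,C_P$, where $P=\chi(H)\le\omega(H)^k$, each $C_c$ is independent in $H$, so the modules indexed by $C_c$ are pairwise anticomplete and $G\big[\bigcup_{v\in C_c}M_v\big]$ is a disjoint union, colourable with $\max_{v\in C_c}\chi(M_v)\le m_c^{2k+3}$ colours, where $m_c:=\max_{v\in C_c}a_v$; using disjoint palettes across classes gives $\chi(G)\le\sum_{c=1}^{P} m_c^{2k+3}$. It thus suffices to exhibit a proper colouring of $H$ for which $\sum_c m_c^{2k+3}\le\omega(G)^{2k+3}$. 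The mechanism I would use is a layer-cake estimate: writing $W=\omega(G)$ and $U_t=\{v:a_v>t\}$, one has $\sum_c m_c^{2k+3}=(2k+3)\int_0^{W} t^{2k+2}\,\big|\{c:m_c>t\}\big|\,dt$, and $|\{c:m_c>t\}|$ is exactly the number of colour classes meeting $U_t$. The crucial geometric input is that $\omega(H[U_t])\le W/t$, because a clique inside $U_t$ consists of modules each of weight $>t$ whose weights sum to at most $W$. If the number of classes meeting $U_t$ can be held to $(W/t)^{k}$, the integral evaluates to $(2k+3)W^{k}\int_0^{W}t^{\,k+2}\,dt=\tfrac{2k+3}{k+3}\,W^{2k+3}$; note how the target exponent splits as $2k+3=k+(k+3)$, the leading $k$ coming from the prime bound and the surplus $k+3$ being produced by integrating $t^{k+2}$, so that the ``$+3$'' in the statement is precisely the cost of the layering.

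The heart of the matter, and the step I expect to be the main obstacle, is exactly this last point: bounding the number of colour classes meeting each heavy-threshold set $U_t$ by $(W/t)^{k}\approx\omega(H[U_t])^{k}$. The hypothesis delivers the exponent $k$ only for \emph{prime} graphs, whereas the sets $H[U_t]$ are induced subgraphs of the prime graph $H$ and need not be prime; applying the induction hypothesis to them instead would only give the exponent $2k+3$, which turns the integrand into $t^{-1}$ and reintroduces a spurious logarithmic factor. Making a single colouring of $H$ simultaneously efficient on every (nested) threshold set $U_t$ with the exponent $k$ inherited from primality — rather than the weaker inductive exponent — is the real work, and is where I expect the argument to require care (for instance, choosing the colouring of $H$ so that its restriction to each $U_t$ is near-optimal, and absorbing the resulting $O(1)$ factor and the base cases $\omega(G)\le 1$ into the exponent slack).
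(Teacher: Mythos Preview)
A preliminary remark: the paper does not prove this theorem. It is quoted from Bourneuf and Thomass\'{e} and invoked as a black box, so there is no in-paper argument to compare your proposal against.

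On its own merits, your setup via Gallai's modular decomposition is correct, and the disconnected and complete-quotient cases go through exactly as you say. The gap you yourself flag in the final paragraph, however, is genuine and is not a detail. The layer-cake bound needs $|\{c:m_c>t\}|\le (W/t)^k$, which amounts to colouring $H[U_t]$ with at most $\omega(H[U_t])^k$ colours; but $H[U_t]$ is an arbitrary induced subgraph of the prime graph $H$ and need not itself be prime, so the hypothesis does not apply and only the inductive exponent $2k+3$ is available. As you observe, that turns the integrand into order $t^{-1}$ and the integral diverges. Your proposed fix---selecting a single colouring of $H$ that is simultaneously near-optimal, with exponent $k$, on every nested threshold set $U_t$---is a hope, not an argument: nothing in the hypotheses guarantees such a colouring exists, and producing one (or sidestepping the need for it) is exactly the substantive content of the result you are trying to prove. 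There is also a secondary problem: even granting the wished-for bound on $|\{c:m_c>t\}|$, your integral evaluates to $\tfrac{2k+3}{k+3}\,W^{2k+3}>W^{2k+3}$, so the induction as written does not close; since the target exponent is exactly $2k+3$, there is no ``exponent slack'' into which this constant can be absorbed.
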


We are now ready to prove our main result:

\begin{proof}[Proof of \ref{thm:main}]
We may assume that $t\geq 2$ (because $\chi(G)=\omega(G)=1$ for all $G\in \mca{B}_{1}$). By \ref{thm:CSbullfree} and \ref{thm:subs}, it suffices to show that $\chi(G)\leq \omega(G)^{2\log t+5}$ for every N-perfect graph $G\in \mca{B}_t$. The proof is by induction on $\omega(G)$, and we may assume that $\omega(G)\geq 2$ and $G$ is connected. Suppose that $G\setminus N_G(u)$ is perfect for some $u\in V(G)$. Since $\omega(N_G(u))\leq \omega(G)-1$, it follows from the inductive hypothesis that $\chi(N_G(u))\leq (\omega(G)-1)^{2\log t+5}$. But now $\chi(G)\leq (\omega(G)-1)^{2\log t+5}+\omega(G)\leq \omega(G)^{2\log t+5}$, as desired.

Henceforth, assume that $N_G(u)$ is perfect for all $u\in V(G)$. Let $v\in V(G)$ be fixed. For each $r\in \poi\cup \{0\}$, let $L_r$ be the set of all vertices at distance exactly $r$ from $v$ in $G$. Note that $L_0=\{v\}$ and $L_1=N_G(v)$ (and $L_r=\varnothing$ for all sufficiently large $r$).

\sta{\label{st:primeatdistance} Let $r\in \poi$, let $H$ be a prime induced subgraph of $L_r$, and let $x\in L_{r-1}$. Then either $N_H(x)=H$ or $N_H(x)$ is a stable set.}

If $r=1$, then $x=v$ and $N_H(x)=N_H(v)=H$. Assume that $r\geq 2$. Then $x$ has a neighbor $y\in L_{r-2}$. Suppose for a contradiction that $|N_H(x)|<|H|$ and $N_H(x)$ is not a stable set. Then $N_H(x)$ has a component $K$ with $1<|K|\leq |N_H(x)|<|H|$. Since $K$ is not a homogeneous set in $H$, there is a vertex in $H\setminus K$ with both a neighbor and a non-neighbor in $K$. Since $K$ is a component of $N_H(x)$, there is a vertex $w\in H\setminus N_H(x)$ as well as distinct and adjacent vertices $u,u'\in K$ such that $uw\in E(G)$ and $u'w\notin E(G)$. But now $\{u,u',w,x,y\}$ induces a bull in $G$, a contradiction. This proves \eqref{st:primeatdistance}.

\sta{\label{st:distancechi} For every $r\in \poi\cup \{0\}$, we have $\chi(L_r)\leq \omega(G)^{2\log t+4}$.}

This is immediate for $r\in \{0,1\}$ because $L_r$ is perfect. Assume that $r\geq 2$. For every $a\in \{1,\ldots, r\}$, we define $S^a_1,\ldots, S^a_{\omega(G)}\subseteq L_a$ recursively, as follows. For $a=1$, let $S^1_1,\ldots, S^1_{\omega(G)}\subseteq L_1$ be (possibly empty) stable sets with union $L_1$, which exist since $L_1=N_G(v)$ is perfect. For $a\in \{2,\ldots, r\}$, having defined $S^{a-1}_1,\ldots, S^{a-1}_{\omega(G)}\subseteq L_{a-1}$, let $S^a_b$, for each $b\in \{1,\ldots, \omega(G)\}$, be the set of all vertices in $L_a$ with a neighbor in $S^{a-1}_b$. Note that for every $a\in \{1,\ldots, r\}$, the sets $S^a_1,\ldots, S^a_{\omega(G)}\subseteq L_a$ have union $L_a$. So in order to prove \eqref{st:distancechi}, it is enough to show that $\chi(S^r_b)\leq \omega(G)^{2\log t+3}$ for every $b\in \{1,\ldots, \omega(G)\}$. To that end, by \ref{thm:subs}, it suffices to show that every prime induced subgraph $H$ of $S^r_b\subseteq L_r$ satisfies $\chi(H)\leq \omega(H)^{\log t}$. If $\omega(H)\leq 1$, then $\chi(H)\leq 1$. So assume that $\omega(H)\geq 2$. Also, if there is a vertex $x\in L_{r-1}$ for which $N_H(x)=H$, then $H$ is perfect and we are done (because $t\geq 2$). Thus, by \eqref{st:primeatdistance}, we may assume that $N_H(x)$ is stable for every $x\in L_{r-1}$. We claim that $\omega(H)=2$. Suppose not. Then there is a triangle $\{u_1,u_2,u_3\}$ in $H$. For each $i\in \{1,2,3\}$, choose a neighbor $x_i\in S^{r-1}_b\subseteq L_{r-1}$ of $u_i$; note that $u_i$ is the only neighbor of $x_i$ in $\{u_1,u_2,u_3\}$ because $N_H(x_i)$ is stable. In particular, $x_1,x_2,x_3$ are pairwise distinct. If there are distinct $i,j\in \{1,2,3\}$ for which $x_ix_j\notin E(G)$, then $\{u_1,u_2,u_3,x_i,x_j\}$ induces a bull in $G$, a contradiction. So $\{x_1,x_2,x_3\}\subseteq S^{r-1}_b\subseteq L_{r-1}$ is a triangle in $G$. Since $S^1_b$ is stable, it follows that $r\geq 3$, which in turn implies that $x_1$ has a neighbor $y\in L_{r-2}$, and $y$ has a neighbor $z\in L_{r-3}$. Now, if $x_2y,x_3y\notin E(G)$, then $\{u_2,x_1,x_2,x_3,y\}$ induces a bull in $G$, and if $x_iy\in E(G)$ for some $i\in \{2,3\}$, then $\{u_1,x_1,x_i,y,z\}$ induces a bull in $G$, a contradiction. The claim that $\omega(H)=2$ follows. Since $G\in \mca{B}_t$ and $\omega(H)=2$, it follows that $\chi(H)\leq t=\omega(H)^{\log t}$. This proves \eqref{st:distancechi}.
\medskip

Note that $\chi(G)\leq 2\max_{r\geq 0}\chi(L_r)$ because $G$ is connected. Hence, by \eqref{st:distancechi} and since $\omega(G)\geq 2$, we have $\chi(G)\leq \omega(G)^{2\log t+5}$.
\end{proof}

\section{The lower bound}\label{sec:lowerbound}

The proof of \ref{thm:lowerbound} uses the ``Mycielski construction'' \cite{mycielski}, which yields a sequence $\{M_n\}_{n\geq 0}$ of graphs such that for each $n\in \poi$, we have $\omega(M_n)=2$ and $\chi(M_n)=n+1$ (the construction is well-known and we do not need the definition). There is also a beautiful recursion for the ``fractional chromatic number'' of the Mycielski graphs \cite{mycielskifrac} (again, we do not need the definition of the fractional chromatic number). For each $n\in \poi$, let $\phi_n$ be the fractional chromatic number of $M_n$.

\begin{theorem}[Larsen, Propp, Ullman \cite{mycielskifrac}]\label{thm:mycielskifrac}
We have $\phi_1=2$ and $\phi_{n+1}=\phi_{n}+\phi_n^{-1}$ for every $n\in \poi$.
\end{theorem}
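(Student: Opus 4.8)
The plan is to reduce the theorem to the single identity
\[
\chi_f(M(G)) = \chi_f(G) + \frac{1}{\chi_f(G)}
\]
for the Mycielskian $M(G)$ of an arbitrary graph $G$, where $\chi_f$ denotes the fractional chromatic number. Indeed, the Mycielski graphs satisfy $M_1 = K_2$ and $M_{n+1} = M(M_n)$; since $\chi_f(K_2) = 2$ this gives $\phi_1 = 2$, and the identity immediately yields $\phi_{n+1} = \phi_n + \phi_n^{-1}$ for every $n \in \poi$. Throughout I would use the two linear-programming formulations of $\chi_f$: the \emph{primal}, in which $\chi_f(G)$ is the minimum total weight of a nonnegative weighting $\{x_I\}$ of the independent sets of $G$ with $\sum_{I \ni i} x_I \geq 1$ for every vertex $i$; and the \emph{dual}, in which $\chi_f(G)$ is the maximum total weight of a nonnegative weighting $z$ of $V(G)$ with $z(I) \leq 1$ for every independent set $I$ (a \emph{fractional clique}). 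Both optima are attained, and $\chi_f$ is monotone under taking induced subgraphs.

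For the upper bound I would exhibit an explicit fractional coloring of $M(G)$ of value $k + 1/k$, where $k = \chi_f(G)$. Write $V(G) = \{1,\dots,n\}$ and let $M(G)$ have original vertices $v_i$, shadows $u_i$, and apex $w$, and fix an optimal primal weighting $\{x_I\}$ of $G$. The key structural observations are that the entire shadow class $U = \{u_1,\dots,u_n\}$ is independent in $M(G)$, that each set $\hat I = \{v_i,u_i : i \in I\}$ is independent whenever $I$ is, and that each set $\{w\}\cup\{v_i : i \in I\}$ is independent. Assigning weight $1/k$ to $U$, weight $(1-1/k)x_I$ to each $\hat I$, and weight $(1/k)x_I$ to each $\{w\}\cup\{v_i : i \in I\}$, one checks that every $u_i$ is covered with weight at least $1/k + (1-1/k) = 1$, every $v_i$ with weight at least $\sum_{I \ni i} x_I \geq 1$, and $w$ with weight $(1/k)\sum_I x_I = 1$, for a grand total of $1/k + (1-1/k)k + (1/k)k = k + 1/k$.

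For the lower bound I would produce a feasible dual solution of the same value. Starting from an optimal fractional clique $z$ of $G$ (so $\sum_i z_i = k$), set $y_{v_i} = (1-1/k)z_i$, $y_{u_i} = (1/k)z_i$, and $y_w = 1/k$; the total is exactly $k + 1/k$, so it remains only to verify $y(S) \leq 1$ for every independent set $S$ of $M(G)$. The sets of the forms $\hat I$, $\{w\}\cup\{v_i : i \in I\}$, and $U$ are dispatched directly: their $y$-weights are at most $z(I) \leq 1$, at most $1/k + (1-1/k)z(I) \leq 1$, and exactly $(1/k)\sum_i z_i = 1$, respectively.

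The main obstacle is the remaining, genuinely mixed independent sets $A \cup B$ with $A \subseteq \{v_i\}$ independent and $B$ contained in the shadows $u_i$ whose index $i$ has no neighbor in $A$. Taking $B$ as large as possible and writing $s = z(A)$, feasibility reduces to the inequality $s + \tfrac1k\, z(R) \leq 1$, where $R$ is the set of vertices non-adjacent to all of $A$ but not lying in $A$; that is, it reduces to showing $z(R) \leq k(1-s)$. I would prove this by covering the induced subgraph $G[R]$ with a fractional coloring of total weight $\chi_f(G[R]) \leq k$, and noting that for each independent set $R_0$ in this cover the union $A \cup R_0$ is independent in $G$, whence $z(R_0) \leq 1 - s$; averaging over the cover gives $z(R) \leq (1-s)\,\chi_f(G[R]) \leq (1-s)k$, exactly as required. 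This closes the lower bound, establishes the Mycielskian identity, and hence proves the theorem.
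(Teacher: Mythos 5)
The paper does not prove this statement at all: Theorem~\ref{thm:mycielskifrac} is quoted as a black box from Larsen, Propp and Ullman, and the only ``proof'' in the paper is the citation. So there is nothing internal to compare against; what you have done is reconstruct, correctly, the Larsen--Propp--Ullman argument itself. Your reduction to the single identity $\chi_f(M(G))=\chi_f(G)+1/\chi_f(G)$ is the right one (with $M_1=K_2$, $M_{n+1}=M(M_n)$, $\chi_f(K_2)=2$), and both halves check out. For the upper bound, the three families of independent sets you use ($U$, $\hat I$, $\{w\}\cup\{v_i: i\in I\}$) are indeed independent in $M(G)$ (note $v_iu_i\notin E(M(G))$ since $G$ has no loops), and the coverage computation gives each vertex weight at least $1$ with total $\tfrac1k+(1-\tfrac1k)k+\tfrac1k\cdot k=k+\tfrac1k$. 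For the lower bound, the only delicate point is exactly the one you isolate: a $w$-free independent set of $M(G)$ has the form $\{v_i:i\in A\}\cup\{u_j:j\in B\}$ with $A$ independent and $B$ contained in the non-neighbors of $A$ (and $B$ may meet $A$), and after taking $B$ maximal, feasibility becomes $z(A)+\tfrac1k z(R)\le 1$ with $R$ the non-neighbors of $A$ outside $A$. Your covering argument --- any fractional coloring of $G[R]$ of weight $\chi_f(G[R])\le k$ consists of independent sets $R_0$ with $A\cup R_0$ independent in $G$, hence $z(R_0)\le 1-z(A)$, and averaging gives $z(R)\le (1-z(A))k$ --- is sound, and weak LP duality then yields the matching lower bound. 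The one hypothesis worth stating explicitly is $k\ge 1$ (needed so the weights $(1-1/k)x_I$ and $(1-1/k)z_i$ are nonnegative), which of course holds. In short: your proof is correct and self-contained, and it fills in a proof the paper deliberately omits.
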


\begin{corollary}\label{cor:frac}
For every $n\in \poi$, we have $\phi_n\geq \sqrt{2(n+1)}$.
\end{corollary}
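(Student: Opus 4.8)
The plan is to track the squares $\phi_n^2$ rather than $\phi_n$ itself, because the recursion from \ref{thm:mycielskifrac} squares cleanly and disposes of the awkward inverse term. Squaring $\phi_{n+1}=\phi_n+\phi_n^{-1}$ gives
\[
\phi_{n+1}^2=\phi_n^2+2+\phi_n^{-2}.
\]
Writing $a_n=\phi_n^2$, this reads $a_{n+1}=a_n+2+a_n^{-1}$, and since $a_n^{-1}>0$ we obtain the clean one-step estimate $a_{n+1}>a_n+2$. The point of passing to squares is exactly that the term $\phi_n^{-1}$ in the original recursion becomes a nonnegative $\phi_n^{-2}$, which can simply be dropped.

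From here I would prove $a_n\geq 2(n+1)$ by induction on $n$. For the base case, \ref{thm:mycielskifrac} gives $\phi_1=2$, so $a_1=4=2(1+1)$, with equality. For the inductive step, assuming $a_n\geq 2(n+1)$, the one-step estimate yields
\[
a_{n+1}>a_n+2\geq 2(n+1)+2=2\bigl((n+1)+1\bigr),
\]
completing the induction. Taking square roots gives $\phi_n=\sqrt{a_n}\geq\sqrt{2(n+1)}$, which is the claimed bound.

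I do not expect any genuine obstacle here: the entire content is the observation that squaring linearizes the recursion up to a discardable positive error. It is worth recording that the bound is tight at $n=1$, where $\phi_1=2=\sqrt{2\cdot 2}$, and strict for every $n\geq 2$ (since then the accumulated $a_n^{-1}$ contributions are positive), so the constant cannot be improved at the first step.
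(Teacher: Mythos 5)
Your proof is correct and is essentially the same argument as the paper's: both square the recursion from \ref{thm:mycielskifrac} to get $\phi_{k+1}^2-\phi_k^2=2+\phi_k^{-2}\geq 2$ and accumulate the gain of $2$ per step starting from $\phi_1^2=4$. The paper writes this as a telescoping sum while you phrase it as an induction, which is only a cosmetic difference.
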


\begin{proof}
This is clear for $n=1$. Assume that $n\geq 2$. Then by \ref{thm:mycielskifrac}, we have
\[
\phi_n^2=\phi_1^2+\sum_{k=1}^{n-1}\left(\phi^2_{k+1}-\phi_k^2\right)
=4+\sum_{k=1}^{n-1}\left(2+\phi_k^{-2}\right)
\geq 4+2(n-1)=2(n+1).
\]
\end{proof}

Let $G_1$ and $G_2$ be graphs with disjoint non-empty vertex sets, and let $x\in V(G_1)$. A graph $G$ is obtained by \textit{substituting $G_2$ for $x$ in $G_1$} if $V(G)=(V(G_1)\setminus \{x\})\cup V(G_2)$, and the following hold:
\begin{itemize}
    \item For all $u,v\in V(G_1)\setminus \{x\}$, we have $uv\in E(G)$ if and only if $uv\in E(G_1)$.
    \item For all $u,v\in V(G_2)$, we have $uv\in E(G)$ if and only if $uv\in E(G_2)$.
    \item Each $v\in N_{G_1}(x)$ is adjacent in $G$ to all vertices in $V(G_2)$.
    \item Each $v\in V(G_1)\setminus (N_{G_1}(x)\cup \{x\})$ is non-adjacent in $G$ to all vertices in $V(G_2)$.
\end{itemize}
In particular, if $|V(G_1)|=1$, then $G=G_2$, if $|V(G_2)|=1$, then $G=G_1$, and if $|V(G_1)|,|V(G_2)|>1$, then $V(G_2)$ is a homogeneous set in $G$. Given a graph class $\mca{C}$, we write $\mca{C}^*$ for the closure of $\mca{C}$ under disjoint union and substitution. Observe that if $\mca{C}$ is hereditary, then so is $\mca{C}^*$.

\begin{theorem}[Chudnovsky, Penev, Scott, Trotignon; see 2.4 in \cite{substituition}]\label{thm:frac}
Let $d\in \poi$ and let $H$ be a triangle-free graph of fractional chromatic number larger than $2^d$. Let $\mca{C}$ be the union of the class of all induced subgraphs of $H$ and the class of all complete graphs. Then every polynomial $\chi$-bounding function for $\mca{C}^*$ has degree larger than $d$.
\end{theorem}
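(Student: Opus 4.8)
The plan is the standard \emph{substitution power} (lexicographic blow-up) construction: I would exhibit inside $\mca{C}^*$ a sequence of graphs whose chromatic number outgrows every fixed power $\omega^d$ of the clique number, which is precisely what defeats any $\chi$-bounding polynomial of degree $\le d$. First I would record two easy reductions. Since $H$ is triangle-free with fractional chromatic number $\chi_f(H)>2^d\ge 2$, $H$ is not edgeless, so $\omega(H)=2$. Writing $H[K]$ for the graph obtained by substituting a copy of $K$ for every vertex of $H$ (carried out one vertex at a time), set $H^{(1)}=H$ and $H^{(n)}=H[H^{(n-1)}]$. Because $H\in\mca{C}$ and $\mca{C}^*$ is closed under substitution, a one-vertex-at-a-time argument shows each intermediate graph stays in $\mca{C}^*$, so by induction $H^{(n)}\in\mca{C}^*$. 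A routine analysis of cliques in a lexicographic product gives $\omega(H[K])=\omega(H)\omega(K)$, hence $\omega(H^{(n)})=2^n$, which in particular tends to infinity so that we are probing arbitrarily large clique numbers.

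The crux, and the only real technical input, is a good lower bound on $\chi(H^{(n)})$. For this I would pass to the fractional chromatic number $\chi_f\le\chi$ and use its (super)multiplicativity under lexicographic product, $\chi_f(H[K])\ge\chi_f(H)\chi_f(K)$, which has a one-line proof via LP duality that I would include. Recall that $\chi_f(G)=\max\{\sum_v w_v : w\ge 0,\ \sum_{v\in I}w_v\le 1 \text{ for every stable set } I\}$ (the fractional clique number). Given optimal weightings $w$ on $H$ and $u$ on $K$, put $z_{(h,k)}=w_h u_k$ on $H[K]$. Every stable set $S$ of $H[K]$ projects to a stable set $\pi(S)=\{h:(h,k)\in S \text{ for some }k\}$ of $H$, and within each column its trace is stable in $K$; therefore $\sum_{(h,k)\in S}z_{(h,k)}=\sum_{h\in\pi(S)}w_h\bigl(\sum_{k:(h,k)\in S}u_k\bigr)\le\sum_{h\in\pi(S)}w_h\le 1$. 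Thus $z$ is a fractional clique of total weight $\chi_f(H)\chi_f(K)$, which proves the inequality; iterating yields $\chi(H^{(n)})\ge\chi_f(H^{(n)})\ge\chi_f(H)^n$.

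Finally I would combine the two estimates. Setting $\lambda=\chi_f(H)/2^d>1$, we obtain $\chi(H^{(n)})\ge\chi_f(H)^n=\lambda^n\,2^{dn}=\lambda^n\,\omega(H^{(n)})^d$, where $\lambda^n\to\infty$. If $f$ were a polynomial $\chi$-bounding function for $\mca{C}^*$ of degree at most $d$, then $f(x)\le Cx^d$ for some constant $C$ and all large $x$, whence $\lambda^n\,\omega(H^{(n)})^d\le\chi(H^{(n)})\le f(\omega(H^{(n)}))\le C\,\omega(H^{(n)})^d$, i.e.\ $\lambda^n\le C$ for all $n$, a contradiction. Hence every polynomial $\chi$-bounding function for $\mca{C}^*$ has degree larger than $d$.

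The main obstacle is exactly the multiplicativity of $\chi_f$ under the lexicographic product; everything else is bookkeeping. The two points to be careful about are that the substitution powers genuinely remain in $\mca{C}^*$ (so that the one-vertex-at-a-time substitution is applied inside the closure), and that it is the \emph{fractional} chromatic number of $H$ — which multiplies exactly — rather than $\chi(H)$ that is carried through the product, since only $\chi_f(H)>2^d$ is preserved under iteration while $\chi_f\le\chi$ suffices to transfer the bound to $\chi(H^{(n)})$.
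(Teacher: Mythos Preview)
The paper does not prove this statement; it is quoted from \cite{substituition} and used as a black box in the proof of \ref{thm:lowerbound}. Your argument is correct and is essentially the same construction as in the cited source: take lexicographic powers $H^{(n)}$ of $H$, observe they lie in $\mca{C}^*$ with $\omega(H^{(n)})=2^n$, and use multiplicativity of $\chi_f$ under lexicographic product together with $\chi_f\le\chi$ to get $\chi(H^{(n)})\ge\chi_f(H)^n$, which beats any polynomial of degree at most $d$ in $\omega$.
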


We now prove \ref{thm:lowerbound}:

\begin{proof}[Proof of \ref{thm:lowerbound}]
The result is immediate for $t=1$. Assume that $t\geq 2$. Let $f$ be a polynomial $\chi$-bounding function for $\mca{B}_t$ of degree $\delta\in \poi$. Let $\mca{I}$ be the class of all induced subgraphs of $M_{t-1}$ and let $\mca{C}$ be the union of $\mca{I}$ and the class of all complete graphs.

Observe that $\mca{C}$ is hereditary, and so $\mca{C}^*$ is hereditary. Also, every graph in $\mca{C}$ is bull-free. Since a bull is connected and has no homogeneous set, it follows that every graph in $\mca{C}^*$ is bull-free. Moreover, since $\mca{I}$ is exactly the class of all triangle-free graphs in $\mca{C}$, it follows that the class of all triangle-free graphs in $\mca{C}^*$ is exactly the closure of $\mca{I}$ under disjoint union and substitution of stable sets for vertices. Since $\chi(M_{t-1})=t\geq 2$, it follows that $\chi(G)\leq t$ for every triangle-free graph $G\in \mca{C}^*$. We deduce that $\mca{C}^*\subseteq \mca{B}_t$, and so $f$ is a polynomial $\chi$-bounding function for $\mca{C}^*$.

On the other hand, by \ref{cor:frac}, we have $\phi_{t-1}\geq \sqrt{2t}
=2^{\frac{1}{2}(\log t +1)}
>2^{\lfloor \frac{1}{2}\log t\rfloor}$. Hence, by \ref{thm:frac}, we have $\delta>\lfloor \frac{1}{2}\log t\rfloor$, and thus $\delta>\frac{1}{2}\log t$ since $\delta$ is an integer.
\end{proof}

\section{Acknowledgement}
Our thanks to Sophie Spirkl for helpful feedback.

\bibliographystyle{plain}
\bibliography{ref}

\end{document}